\documentclass[leqno,12pt]{article} 
\setlength{\textheight}{21cm}
\setlength{\textwidth}{16cm}
\setlength{\oddsidemargin}{0cm}
\setlength{\evensidemargin}{0cm}
\setlength{\topmargin}{0cm}
\usepackage{amsmath, amssymb}
\usepackage{amsthm} 
%
%
%
\theoremstyle{plain} 
\newtheorem{theorem}{\indent\sc Theorem}[section]
\newtheorem{lemma}[theorem]{\indent\sc Lemma}
\newtheorem{corollary}[theorem]{\indent\sc Corollary}
\newtheorem{proposition}[theorem]{\indent\sc Proposition}

\theoremstyle{definition} 

\newtheorem{remark}[theorem]{\indent\sc Remark}
\newtheorem{example}[theorem]{\indent\sc Example}

%

%

\newcommand\on{\operatorname}
\renewcommand\div{\on{div}}
\newcommand\Hess{\on{Hess}}
\newcommand\R{\on{Ric}}

\newcommand\sign{\on{sign}}

\newcommand\sech{\on{sech}}

\makeatletter
%
\makeatother
%
\title{Constructing Ricci vector fields \\ on $\mathbb R^2$ with a diagonal metric}
%
\author{Adara M. Blaga
}

\date{}
%
\pagestyle{myheadings}

\begin{document}

\maketitle

\markboth{{\small\it {\hspace{1cm} Constructing Ricci vector fields on $\mathbb R^2$ with a diagonal metric}}}{\small\it{Constructing Ricci vector fields on $\mathbb R^2$ with a diagonal metric\hspace{1cm}}}

\footnote{ 
2010 \textit{Mathematics Subject Classification}.
35Q51, 53B25, 53B50.
}
\footnote{ 
\textit{Key words and phrases}.
diagonal metric; Ricci vector field.
}

\begin{abstract}
We put into light Ricci vector fields on $\mathbb R^2$ endowed with a diagonal metric.
\end{abstract}

\section{Preliminaries}

Ricci vector fields constitute a special class of vector fields on a Riemannian manifold, whose properties have been studied in \cite{alde, aldevi, adara, ri}. We recall that a vector field $V$ is called an $f$-\textit{Ricci vector field} \cite{ri} if $\nabla_XV=f\cdot QX$ for all smooth vector fields $X$ on $\mathbb R^2$, where $f$ is a smooth real-valued function on $\mathbb R^2$, $Q$ is the Ricci operator defined by $g(QX_1,X_2):=\R(X_1,X_2)$ for $X_1,X_2$ smooth vector fields on $\mathbb R^2$, and $\R$ is the Ricci curvature tensor field of $(\mathbb R^2,g)$. If $f=1$, we have a particular class of Ricci vector fields, which we shall denote by $\mathfrak R$, which have some special properties. Let us remark that, for any $V\in \mathfrak R$ and any $X_1,X_2$ smooth vector fields on $\mathbb R^2$, we have
\begin{align*}
R(X_1,X_2)V&=\nabla_{X_1}\nabla_{X_2}V-\nabla_{X_2}\nabla_{X_1}V-\nabla_{[X_1,X_2]}V\\
&=\nabla_{X_1}(QX_2)-\nabla_{X_2}(QX_1)-Q(\nabla_{X_1}X_2-\nabla_{X_2}X_1)\\
&=(\nabla_{X_1}Q)X_2-(\nabla_{X_2}Q)X_1,\\
\R(V,V)&=g(QV,V)=g(\nabla_VV,V)=\frac{1}{2}V(\Vert V\Vert^2),\\
r&=\sum_{i=1}^2\R(E_i,E_i)=\sum_{i=1}^2g(QE_i,E_i)=\sum_{i=1}^2g(\nabla_{E_i}V,E_i)=\div(V),
\end{align*}
where $R$ is the Riemannian curvature tensor field and $r$ is the scalar curvature of $(M,g)$. On the other hand, any $V\in \mathfrak R$ is a closed vector field, $V_1-V_2$ is a parallel vector field, $\displaystyle\frac{V_1+V_2}{2}\in \mathfrak R$, and $V_1(\Vert V_2\Vert^2)=V_2(\Vert V_1\Vert^2)$ for any $V_1,V_2\in \mathfrak R$. Indeed, if $\eta$ is the dual $1$-form of $V$, then, for any smooth vector fields $X,X_1,X_2$ on $\mathbb R^2$, we have
\begin{align*}
(d\eta)(X_1,X_2)&=X_1(\eta(X_2))-X_2(\eta(X_1))-\eta([X_1,X_2])\\
&=X_1(g(V,X_2))-X_2(g(V,X_1))-g(V,\nabla_{X_1}X_2-\nabla_{X_2}X_1)\\
&=g(\nabla_{X_1}V,X_2)-g(\nabla_{X_2}V,X_1)\\
&=g(QX_1,X_2)-g(QX_2,X_1)=0,\\
\nabla_{X}(V_1-V_2)&=0,\ \
\nabla_{X}\left(\displaystyle \frac{V_1+V_2}{2}\right)=QX.
\end{align*}

Moreover, if $V$ is a gradient vector field, $V=\nabla f$, then, $\Hess(f)=\R$, $\Delta(f)=r$, and $\displaystyle \frac{1}{2}\pounds_{\nabla(-f)}g+\R=0$, i.e., $(\mathbb R^2,g,\nabla(-f),0)$ is a gradient steady Ricci soliton (see \cite{hamilton} for its definition). We mention that, in \cite{blvi}, we put into relation statistical structures with Ricci and Hessian metrics to gradient Ricci solitons (see also \cite{b,BC1}). On the other hand, almost $\eta$-Ricci solitons with a diagonal metric have been recently considered in \cite{bl22}.

\bigskip

In the present paper, we aim to construct Ricci vector fields $V$ with respect to a diagonal Riemannian metric $g$, which satisfy $\nabla V=Q$, equivalent to, $$g(\nabla_{E_i} V,E_j)=\R(E_i,E_j)$$ for any $i,j\in \{1,2\}$. A similar study concerning the Killing vector fields on $\mathbb R^2$ with the same metric has been done by the author in \cite{bl2e}.

\section{Ricci vector fields w.r.t. these metrics}

Let
${g}=\displaystyle \frac{1}{f_1^2}dx^1\otimes dx^1+\displaystyle \frac{1}{f_2^2}dx^2\otimes dx^2$, where $f_1,f_2$ are smooth real-valued functions nowhere zero on $\mathbb R^2$ and $x^1,x^2$ are the standard coordinates in $\mathbb R^2$, when the two functions depend on one of the variables,
and let
$$\Big\{E_1:=f_1\frac{\partial}{\partial x^1}, \ \ E_2:=f_2\frac{\partial}{\partial x^2}\Big\}$$
be a local orthonormal frame. We will denote as follows:
$$\frac{f_2}{f_1}\cdot\frac{\partial f_1}{\partial x^2}=:h_{12}, \ \ \frac{f_1}{f_2}\cdot\frac{\partial f_2}{\partial x^1}=:h_{21}.$$
On the base vector fields, the Lie bracket is given by:
$$[E_1,E_2]=-h_{12}E_1+h_{21}E_2=-[E_2,E_1],$$
and the Levi-Civita connection $\nabla$ of $g$ is given by (see \cite{blagalatcu}):
$$\nabla_{E_1}E_1=h_{12}E_2, \ \ \nabla_{E_2}E_2=h_{21}E_1, \ \ \nabla_{E_1}E_2=-h_{12}E_1, \ \ \nabla_{E_2}E_1=-h_{21}E_2.$$

Whenever a function $f$ on $\mathbb R^2$ depends only on one of its variables, we will write in its argument only that variable in order to underline this fact, for example, $f(x^i)$.

\bigskip

Let $V=\sum_{k=1}^2V^kE_k$ with $V^1,V^2$ two smooth functions on $\mathbb R^2$. Then,
\begin{align*}
g(\nabla_{E_i}V,E_j)&=E_i(V^j)+V^1 g(\nabla_{E_i}E_1,E_j)+V^2 g(\nabla_{E_i}E_2,E_j)
\end{align*}
for any $i,j\in \{1,2\}$, which is equivalent to
\begin{equation}\label{s1}
\left\{
    \begin{aligned}
&g(\nabla_{E_1}V,E_1)=E_1(V^1)-h_{12}V^2\\
&g(\nabla_{E_2}V,E_2)=E_2(V^2)-h_{21}V^1\\
&g(\nabla_{E_1}V,E_2)=E_1(V^2)+h_{12}V^1\\
&g(\nabla_{E_2}V,E_1)=E_2(V^1)+h_{21}V^2
    \end{aligned}
  \right. \ .
\end{equation}

We know that, on the base vector fields $E_1=f_1\displaystyle\frac{\partial}{\partial x^1}$, $E_2=f_2 \displaystyle\frac{\partial}{\partial x^2}$, the Ricci curvature tensor field of the metric ${g}=\displaystyle \frac{1}{f_1^2}dx^1\otimes dx^1+\displaystyle \frac{1}{f_2^2}dx^2\otimes dx^2$ is given by \cite{blagalatcu}:
$$\R(E_1,E_1)=E_1(h_{21})+E_2(h_{12})-h_{21}^2-h_{12}^2=\R(E_2,E_2),$$$$\R(E_1,E_2)=0,$$
and we can state
\begin{lemma}\label{lps}
The vector field $V=\sum_{k=1}^2V^kE_k$ with $V^1,V^2$ two smooth functions on $\mathbb R^2$, is a Ricci vector field satisfying $\nabla V=Q$ if and only if
 \begin{equation}\label{s1s}
\left\{
    \begin{aligned}
&f_1\displaystyle{\frac{\partial V^1}{\partial x^1}}-\displaystyle{\frac{f_2}{f_1}}\cdot\displaystyle{\frac{\partial f_1}{\partial x^2}}V^2=f_1\displaystyle{\frac{\partial }{\partial x^1}}\left(\displaystyle{\frac{f_1}{f_2}}\cdot\displaystyle{\frac{\partial f_2}{\partial x^1}}\right)-\left(\displaystyle{\frac{f_1}{f_2}}\cdot\displaystyle{\frac{\partial f_2}{\partial x^1}}\right)^2+f_2\displaystyle{\frac{\partial }{\partial x^2}}\left(\displaystyle{\frac{f_2}{f_1}}\cdot\displaystyle{\frac{\partial f_1}{\partial x^2}}\right)-\left(\displaystyle{\frac{f_2}{f_1}}\cdot\displaystyle{\frac{\partial f_1}{\partial x^2}}\right)^2\\
&f_1\displaystyle{\frac{\partial V^1}{\partial x^1}}+\displaystyle{\frac{f_1}{f_2}}\cdot\displaystyle{\frac{\partial f_2}{\partial x^1}}V^1=f_2\displaystyle{\frac{\partial V^2}{\partial x^2}}+\displaystyle{\frac{f_2}{f_1}}\cdot\displaystyle{\frac{\partial f_1}{\partial x^2}}V^2\\
&f_1\displaystyle{\frac{\partial V^2}{\partial x^1}}=-\displaystyle{\frac{f_2}{f_1}}\cdot\displaystyle{\frac{\partial f_1}{\partial x^2}}V^1\\
&f_2\displaystyle{\frac{\partial V^1}{\partial x^2}}=-\displaystyle{\frac{f_1}{f_2}}\cdot\displaystyle{\frac{\partial f_2}{\partial x^1}}V^2
    \end{aligned}
  \right. \ .
\end{equation}
\end{lemma}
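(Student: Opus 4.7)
The plan is to translate the single tensorial condition $\nabla V = Q$, which by definition means $g(\nabla_{E_i}V,E_j) = \R(E_i,E_j)$ for every $i,j\in\{1,2\}$, into the four scalar equations of \eqref{s1s}. All the ingredients are already on the table: the left-hand sides are computed in \eqref{s1}, and the right-hand sides are the Ricci components displayed just above the lemma, namely $\R(E_1,E_1) = \R(E_2,E_2) = E_1(h_{21}) + E_2(h_{12}) - h_{21}^2 - h_{12}^2$ and $\R(E_1,E_2) = 0$.

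First I would dispatch the off-diagonal conditions. Setting $g(\nabla_{E_1}V,E_2) = 0$ and $g(\nabla_{E_2}V,E_1) = 0$ in \eqref{s1} yields $E_1(V^2) + h_{12}V^1 = 0$ and $E_2(V^1) + h_{21}V^2 = 0$; substituting $E_1 = f_1\,\partial/\partial x^1$, $E_2 = f_2\,\partial/\partial x^2$ and the definitions of $h_{12}, h_{21}$ gives the third and fourth lines of \eqref{s1s} verbatim.

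For the two diagonal conditions, rather than writing the identical Ricci expression on the right twice, I would record $g(\nabla_{E_1}V,E_1) = \R(E_1,E_1)$ as the first line of \eqref{s1s}, expanding $E_1(h_{21})$, $E_2(h_{12})$, $h_{12}^2$, $h_{21}^2$ in the coordinate form shown. Then I would exploit $\R(E_1,E_1) = \R(E_2,E_2)$ to equate the left-hand sides of the first two lines of \eqref{s1}, obtaining $E_1(V^1) - h_{12}V^2 = E_2(V^2) - h_{21}V^1$, which rearranges to the second line of \eqref{s1s} after substituting the definitions. Conversely, given the four equations of \eqref{s1s}, the first and second together recover $g(\nabla_{E_2}V,E_2) = \R(E_2,E_2)$, closing the equivalence.

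There is no genuine obstacle here; the argument is bookkeeping via \eqref{s1} and the stated Ricci formulas. The only point deserving care is recognizing that the second equation of \eqref{s1s} is not a separate Ricci equation but rather the compatibility relation forced by $\R(E_1,E_1) = \R(E_2,E_2)$, which must be paired with the first equation to reproduce the full diagonal system.
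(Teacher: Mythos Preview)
Your proposal is correct and matches the paper's approach: the lemma is stated immediately after the formulas \eqref{s1} and the Ricci components with the phrase ``and we can state,'' i.e., the paper treats it as a direct transcription of $g(\nabla_{E_i}V,E_j)=\R(E_i,E_j)$ into coordinates with no separate proof. Your observation that the second line of \eqref{s1s} encodes the equality $\R(E_1,E_1)=\R(E_2,E_2)$ rather than a second copy of the Ricci expression, and that one must pair it with the first line for the converse, is exactly the bookkeeping required.
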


Let us firstly determine the conditions for the base vector fields $E_1$ and $E_2$ to be Ricci vector fields.

\begin{proposition}
The vector field $E_i=f_i\frac{\displaystyle \partial}{\displaystyle \partial x^i}$, for $i\in \{1,2\}$, is a Ricci vector field satisfying $\nabla E_i=Q$ if and only if $f_1=f_1(x^1)$ and $f_2=f_2(x^2)$.
\end{proposition}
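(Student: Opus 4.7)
The plan is to substitute $V=E_i$ into the four-equation system \eqref{s1s} of Lemma \ref{lps} and read off the conditions on $f_1, f_2$. Taking $V=E_1$ corresponds to $V^1\equiv 1$, $V^2\equiv 0$, so every partial derivative $\partial V^k/\partial x^\ell$ in \eqref{s1s} vanishes identically. Similarly, $V=E_2$ corresponds to $V^1\equiv 0$, $V^2\equiv 1$. Thus the system collapses to purely algebraic identities involving only $f_1$, $f_2$ and their first partial derivatives, and the conclusion should fall out by inspection.

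For $V=E_1$, the third equation of \eqref{s1s} immediately reduces to $(f_2/f_1)\,(\partial f_1/\partial x^2)=0$, forcing $\partial f_1/\partial x^2=0$, i.e.\ $f_1=f_1(x^1)$. The second equation collapses to $(f_1/f_2)\,(\partial f_2/\partial x^1)=0$, forcing $\partial f_2/\partial x^1=0$, i.e.\ $f_2=f_2(x^2)$. The fourth equation is $0=0$ trivially, while the first equation, under the two conditions just derived, has both $h_{12}$ and $h_{21}$ vanishing, so it reduces to $0=0$ as well. The case $V=E_2$ is symmetric: the fourth equation gives $\partial f_2/\partial x^1=0$ and the second gives $\partial f_1/\partial x^2=0$, with the remaining equations again reducing to trivial identities.

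For the converse, once $f_1=f_1(x^1)$ and $f_2=f_2(x^2)$ one has $h_{12}=h_{21}=0$, and hence $\R(E_i,E_j)=0$ for all $i,j$; therefore $Q\equiv 0$, so $\nabla E_i=Q$ amounts to checking that $\nabla E_i$ also vanishes, which follows directly from the Levi-Civita formulas $\nabla_{E_1}E_1=h_{12}E_2=0$, $\nabla_{E_2}E_1=-h_{21}E_2=0$, and the analogous identities for $E_2$. Alternatively, one can simply note that the system \eqref{s1s} with $V^1,V^2$ constant is satisfied.

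There is no genuine obstacle here: the proof is a direct specialization of Lemma \ref{lps}. The only thing to be careful about is bookkeeping—making sure each of the four equations in \eqref{s1s} is examined for both choices $V=E_1$ and $V=E_2$, and that the tautologically satisfied equations are explicitly acknowledged rather than silently dropped.
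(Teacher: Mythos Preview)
Your proof is correct and follows essentially the same approach as the paper: substitute $V=E_i$ (so $V^i\equiv 1$, $V^{3-i}\equiv 0$) into the system \eqref{s1s} of Lemma~\ref{lps} and read off the conditions $\partial f_1/\partial x^2=0$, $\partial f_2/\partial x^1=0$. You are somewhat more thorough than the paper in explicitly treating both $i=1$ and $i=2$ and in checking that all four equations (including the first) become trivially satisfied once $h_{12}=h_{21}=0$, but the underlying argument is identical.
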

\begin{proof}
The 2nd and 3rd equation of (\ref{s1s}) implies
\begin{equation*}
\left\{
    \begin{aligned}
&\frac{\displaystyle \partial f_2}{\displaystyle \partial x^1}=0\\
&\frac{\displaystyle \partial f_1}{\displaystyle \partial x^2}=0
    \end{aligned}
  \right. \ ,
\end{equation*}
hence, $f_1=f_1(x^1)$ and $f_2=f_2(x^2)$. Conversely, if $f_1$ depends only on $x^1$ and $f_2$ depends only on $x^2$, then, the system (\ref{s1s}) is verified.
\end{proof}

\begin{example}
$E_1$ and $E_2$ are Ricci vector fields on
$$\left(\mathbb R^2, \ {g}=e^{-2x^1}dx^1\otimes dx^1+e^{-2x^2}dx^2\otimes dx^2\right).$$
\end{example}

\begin{theorem}\label{t1}
If $f_i=f_i(x^i)$ for $i\in \{1,2\}$, then, $V=\sum_{k=1}^2V^kE_k$ is a Ricci vector field satisfying $\nabla V=Q$ if and only if $V^k=c_k\in \mathbb R$, $k\in \{1,2\}$. In particular, if $f_1=k_1\in \mathbb R\setminus \{0\}$ and $f_2=k_2\in \mathbb R\setminus \{0\}$, the conclusion holds true.
\end{theorem}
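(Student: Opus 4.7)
The plan is to substitute the hypothesis $f_1=f_1(x^1)$, $f_2=f_2(x^2)$ directly into the system (\ref{s1s}) from Lemma \ref{lps} and read off that all four partial derivatives of $V^1$ and $V^2$ must vanish.

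First I would observe that the hypothesis immediately forces
\[
h_{12}=\frac{f_2}{f_1}\cdot\frac{\partial f_1}{\partial x^2}=0, \qquad h_{21}=\frac{f_1}{f_2}\cdot\frac{\partial f_2}{\partial x^1}=0,
\]
so the right-hand side of the first equation of (\ref{s1s}) collapses to $0$, and every term containing $h_{12}$ or $h_{21}$ disappears throughout the system.

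Next, reading the four equations of (\ref{s1s}) in order: the third and fourth give $f_1\,\partial V^2/\partial x^1=0$ and $f_2\,\partial V^1/\partial x^2=0$, which (since $f_1,f_2$ are nowhere zero) yield $\partial V^2/\partial x^1=0$ and $\partial V^1/\partial x^2=0$. The first equation reduces to $f_1\,\partial V^1/\partial x^1=0$, hence $\partial V^1/\partial x^1=0$; substituting this into the second equation gives $f_2\,\partial V^2/\partial x^2=0$, hence $\partial V^2/\partial x^2=0$. Therefore $V^1$ and $V^2$ have all their partial derivatives equal to zero on $\mathbb R^2$, so each is a real constant $c_k$. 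Conversely, if $V^k=c_k\in\mathbb R$ for $k\in\{1,2\}$, then all four equations of (\ref{s1s}) are trivially satisfied, completing the equivalence.

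The final sentence about $f_1=k_1$, $f_2=k_2$ constants is just a degenerate instance of the above, since constants of course depend on one variable in the required degenerate sense. There is no serious obstacle here: the only mild subtlety is to use the equations of (\ref{s1s}) in a convenient order (equations 3 and 4 first, then 1, then 2) so that the cross-coupling in the second equation is resolved without extra work, and to invoke that $f_1,f_2$ are nowhere zero in order to cancel them.
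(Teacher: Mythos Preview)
Your proof is correct and follows exactly the same approach as the paper: the paper's own argument simply states that under the hypothesis system~(\ref{s1s}) reduces to $\partial V^i/\partial x^j=0$ for all $i,j\in\{1,2\}$, and you have merely made this reduction explicit by tracking how $h_{12}=h_{21}=0$ kills every nontrivial term. There is no divergence in method, only in level of detail.
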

\begin{proof}
In this case, \eqref{s1s} becomes
$$\displaystyle{\frac{\partial V^i}{\partial x^j}}=0, \ \ \textrm{for} \ \ i,j\in\{1,2\},$$
and we get the conclusion.
\end{proof}

\begin{example}
Let $f_1(x^1)=e^{x^1}$ and $f_2(x^2)=e^{x^2}$. Then,
$$V=e^{x^1}\displaystyle\frac{\partial }{\partial x^1}+e^{x^2}\displaystyle\frac{\partial }{\partial x^2}$$
is a Ricci vector field on $\left(\mathbb R^2, \ {g}=e^{-2x^1}dx^1\otimes dx^1+e^{-2x^2}dx^2\otimes dx^2\right)$.
\end{example}

\begin{theorem}
If $f_i=f_i(x^1)$ for $i\in \{1,2\}$, and $f_2'(x^1)\neq 0$ for all $x^1\in \mathbb R$, then, $V=\sum_{k=1}^2V^kE_k$ is a Ricci vector field satisfying $\nabla V=Q$ if and only if
\begin{equation*}
\left\{
    \begin{aligned}
&V^1(x^1)=\frac{c}{f_2(x^1)}\\
&V^2=0
\end{aligned}
\right.  \ \ \textrm{and} \ \ f_1=\displaystyle\frac{kf_2^2+c}{2f_2'}
\end{equation*}
or
\begin{equation*}
\left\{
    \begin{aligned}
&V^1(x^2)=\sign(c)\left[c_2\cos(|c|x^2)-c_1\sin(|c|x^2)\right] \\ &V^2(x^2)=c_1\cos(|c|x^2)+c_2\sin(|c|x^2)
\end{aligned}
\right.  \ \ \textrm{and} \ \ f_1=c\displaystyle\frac{f_2^2}{f_2'},
\end{equation*}
with $k,c\in \mathbb R\setminus\{0\}$, $c_1,c_2\in \mathbb R$.
\end{theorem}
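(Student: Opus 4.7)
The plan is to specialize system~(\ref{s1s}) to the hypothesis $f_i=f_i(x^1)$, observing that $h_{12}=(f_2/f_1)\,\partial_{x^2}f_1=0$ while $h_{21}=f_1 f_2'/f_2$ is nowhere zero (by the assumption $f_2'\neq 0$ together with $f_1,f_2$ nonvanishing). Equation~3 of~(\ref{s1s}) immediately gives $\partial_{x^1}V^2=0$, so $V^2=V^2(x^2)$. Equation~4 then reads $\partial_{x^2}V^1 = -(f_1 f_2'/f_2^2)\,V^2(x^2)$, whose right-hand side is a product of a function of $x^1$ and a function of $x^2$; integrating in $x^2$ gives
$$V^1(x^1,x^2) \;=\; g(x^1)\,H(x^2) + C(x^1),\qquad g(x^1):=-\frac{f_1 f_2'}{f_2^2},\qquad H'(x^2):=V^2(x^2).$$

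The decisive step is a dichotomy derived from equation~1, whose right-hand side depends only on $x^1$. Thus $\partial_{x^1}V^1 = g'(x^1)H(x^2)+C'(x^1)$ must be a function of $x^1$ alone, which forces $g'(x^1)H(x^2)$ to be a function of $x^1$ alone. Since $g$ is nowhere zero, a standard separation-of-variables argument splits into exactly two cases: either $H$ is constant, i.e.\ $V^2\equiv 0$, or $g$ is constant, which rearranges to $f_1=c f_2^2/f_2'$ with $c\in\mathbb R\setminus\{0\}$. These produce the two branches of the conclusion.

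In the first branch $V^1=C(x^1)$, equation~2 becomes the linear ODE $C'=-(f_2'/f_2)C$, integrated to $C=c/f_2$. Feeding this back into equation~1 and setting $u:=f_1 f_2'/f_2$, I obtain the linear ODE $u'-(f_2'/f_2)u=-c f_2'/f_2^2$; the integrating factor $1/f_2$ turns the left side into $(u/f_2)'$ and the right side into $\tfrac12(c/f_2^2)'$, whence $u=(kf_2^2+c)/(2f_2)$ and $f_1=(kf_2^2+c)/(2f_2')$. In the second branch, the identity $f_1=c f_2^2/f_2'$ makes equation~1 collapse to $\partial_{x^1}V^1=0$, forcing $C(x^1)=c_1$ constant and $V^1=c_1-cH(x^2)$; equation~2 then becomes $cV^1=H''(x^2)$, i.e.\ the harmonic-oscillator equation $H''+c^2H=cc_1$. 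Its general solution $H=A\cos(|c|x^2)+B\sin(|c|x^2)+c_1/c$ gives $V^2=H'$, while the additive constant of $H$ cancels in $V^1=c_1-cH$; relabelling $(A,B)$ in terms of $(c_1,c_2)$ and using $c=\sign(c)|c|$ reproduces the stated formulas. The converse is a direct substitution into~(\ref{s1s}).

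The main obstacle is the dichotomy argument in the second paragraph: both alternatives must be justified rigorously from a single scalar constraint, and the cases $g'\not\equiv 0$ versus $H$ nonconstant must be cleanly ruled out against each other. Once that splitting is in place, the remainder reduces to two elementary linear ODEs, with the only genuine care being the bookkeeping of integration constants needed to match the $\sign(c)$ convention appearing in the theorem's trigonometric branch.
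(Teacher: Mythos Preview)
Your proof is correct and follows essentially the same route as the paper: reduce (\ref{s1s}) under $h_{12}=0$, use the third and fourth equations to constrain the $x^1$-- and $x^2$--dependence of $V^1,V^2$, obtain the dichotomy $V^2\equiv 0$ versus $f_1f_2'/f_2^2=c$ constant, and solve each branch by linear first--order ODEs. The only substantive difference is how the dichotomy is reached: you read it off directly from equation~1 (the term $g'(x^1)H(x^2)$ must be independent of $x^2$), whereas the paper instead substitutes into equation~2 and differentiates in $x^2$ to obtain $(V^2)''+(f_1f_2'/f_2^2)^2V^2=0$, which forces the same split.
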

\begin{proof}
In this case, \eqref{s1s} becomes
\begin{equation}\label{s1sa}
\left\{
    \begin{aligned}
&\displaystyle{\frac{\partial V^1}{\partial x^1}}=f_1'\displaystyle{\frac{f_2'}{f_2}}+f_1\left[\left(\displaystyle{\frac{f_2'}{f_2}}\right)'-\left(\displaystyle{\frac{f_2'}{f_2}}\right)^2\right]\\
&f_1\left(\displaystyle{\frac{\partial V^1}{\partial x^1}}+\displaystyle{\frac{f_2'}{f_2}}V^1\right)=f_2\displaystyle{\frac{\partial V^2}{\partial x^2}}\\
&\displaystyle{\frac{\partial V^2}{\partial x^1}}=0\\
&\displaystyle{\frac{\partial V^1}{\partial x^2}}=-f_1\displaystyle{\frac{f_2'}{f_2^2}}V^2
    \end{aligned}
  \right. \ .
\end{equation}
From the 3rd equation, we deduce that
$$V^2=V^2(x^2)$$
and from the 1st equation, we get
$$V^1(x^1,x^2)=F(x^1)+F_1(x^2),$$
where $F'=f_1'\displaystyle{\frac{f_2'}{f_2}}+f_1\left[\left(\displaystyle{\frac{f_2'}{f_2}}\right)'-\left(\displaystyle{\frac{f_2'}{f_2}}\right)^2\right]$.
By differentiating $V^1$ with respect to $x^2$ and considering the 4th equation of the system, we find that
$$F_1'(x^2)=-\left(f_1\displaystyle{\frac{f_2'}{f_2^2}}\right)(x^1)\ V^2(x^2).$$
Then, the 2nd equation of the system becomes
$$f_1(x^1)\left[F'(x^1)+\left(\displaystyle{\frac{f_2'}{f_2}}\right)(x^1)\ [F(x^1)+F_1(x^2)]\right]=f_2(x^1)(V^2)'(x^2),$$
which is equivalent to
$$\left(\frac{f_1}{f_2}\right)(x^1)\left[F'(x^1)+\left(\frac{f_2'}{f_2}\right)(x^1)\ F(x^1)\right]=(V^2)'(x^2)-\left(f_1\displaystyle{\frac{f_2'}{f_2^2}}\right)(x^1)\ F_1(x^2),$$
which, by differentiation with respect to $x^2$ implies
$$(V^2)''(x^2)=\left(f_1\displaystyle{\frac{f_2'}{f_2^2}}\right)(x^1)\ F_1'(x^2)=-\left(f_1\displaystyle{\frac{f_2'}{f_2^2}}\right)^2(x^1)\ V^2(x^2).$$
Since $f_2'(x^1)\neq 0$ for all $x^1\in \mathbb R$, we get $V^2=0$ or $V^2(x^2)=c_1\cos(|c|x^2)+c_2\sin(|c|x^2)$, $c=f_1\displaystyle{\frac{f_2'}{f_2^2}}, c_1,c_2\in \mathbb R$.

\hspace{0.5cm} (1) If $V^2=0$, then $F_1'=0$, hence, $F_1=c_1\in \mathbb R$ and $V^1(x^1)=F(x^1)+c_1$. It follows that
$$\left(\frac{f_1}{f_2}\right)(x^1)\left[F'(x^1)+\left(\frac{f_2'}{f_2}F\right)(x^1)\right]=-\left(f_1\displaystyle{\frac{f_2'}{f_2^2}}\right)(x^1)\ c_1,$$
which implies that
$$(f_2F)'(x^1)=-c_1f_2'(x^1),$$
therefore,
$$F(x^1)=-c_1+\frac{c_2}{f_2(x^1)}, \ c_2\in \mathbb R.$$
Then $V^1(x^1)=\displaystyle\frac{c_2}{f_2(x^1)}$, and the 1st equation of the system implies
$$f_1'f_2+\displaystyle{\frac{f_1f_2^2}{f_2'}}\left[\left(\displaystyle{\frac{f_2'}{f_2}}\right)'-\left(\displaystyle{\frac{f_2'}{f_2}}\right)^2\right]=-c_2,$$
which is equivalent to
$$\displaystyle{\frac{f_2}{f_2'}}(f_1f_2')'-2(f_1f_2')=-c_2.$$
If we denote by $f:=f_1f_2'$, the previous relation can be written as
$$f'=\displaystyle{\frac{f_2'}{f_2}}(2f-c_2).$$
Now, if $f=\displaystyle{\frac{c_2}{2}}$, then $f_1=\displaystyle{\frac{c_2}{2f_2'}}$. If there exists $x_0^1\in \mathbb R$ such that $f(x_0^1)\neq 0$, then, $f(x^1)\neq 0$ for all $x^1\in I:=(x_0^1-\varepsilon, x_0^1+\varepsilon)\subseteq \mathbb R$ for an $\varepsilon>0$, and $\displaystyle{\frac{f'}{2f-c_2}}=\displaystyle{\frac{f_2'}{f_2}}$ on $I$, which, by integration, gives $f_1f_2'=f=\displaystyle{\frac{kf_2^2+c_2}{2}}$ on $I$, where $k\in \mathbb R\setminus \{0\}$. We get $f_1=\displaystyle{\frac{kf_2^2+c_2}{2f_2'}}$ on $I$, from where, since $f_2$ is nowhere $0$ on $\mathbb R$, follows that $f_1$ has this expression on $\mathbb R$.

\hspace{0.5cm} (2) If $V^2(x^2)=c_1\cos(|c|x^2)+c_2\sin(|c|x^2)$ with $f_1\displaystyle{\frac{f_2'}{f_2^2}}=c\in \mathbb R$, we get $f_2(x^1)=\nolinebreak -\displaystyle\frac{1}{cF_2(x^1)}$, where $F_2'=\displaystyle\frac{1}{f_1}$.
Then
$$F_1'(x^2)=-\left(f_1\displaystyle{\frac{f_2'}{f_2^2}}\right)(x^1)\ V^2(x^2)=-cc_1\cos(|c|x^2)-cc_2\sin(|c|x^2),$$
which infers
$$F_1(x^2)=-\frac{cc_1}{|c|}\sin(|c|x^2)+\frac{cc_2}{|c|}\cos(|c|x^2)+c_3, \ \ c_3\in \mathbb R,$$
equivalent to
$$F_1(x^2)=\sign(c)\left[c_2\cos(|c|x^2)-c_1\sin(|c|x^2)\right]+c_3.$$
Then,
$$V^1(x^1,x^2)=\tilde F(x^1)+\sign(c)\left[c_2\cos(|c|x^2)-c_1\sin(|c|x^2)\right],$$
where $\tilde F'=F'$. Replacing now $V^1$ and $V^2$ in the 2nd equation, we get
$$\tilde F'=-\frac{f_2'}{f_2}\tilde F.$$
If there exists a real open interval $I$ such that $\tilde F(x^1)\neq 0$ for all $x^1\in I$, then $\tilde F=\displaystyle\frac{k}{f_2}$ with $k\in \mathbb R\setminus \{0\}$ on $I$, and taking into account that
$$-k\displaystyle{\frac{f_2'}{f_2^2}}=\tilde F'=F'=f_1'\displaystyle{\frac{f_2'}{f_2}}+f_1\left[\left(\displaystyle{\frac{f_2'}{f_2}}\right)'-\left(\displaystyle{\frac{f_2'}{f_2}}\right)^2\right]$$
on $I$, and that $f_1=c\displaystyle\frac{f_2^2}{f_2'}$, we infer $k=0$, hence we obtain a contradiction. Therefore, $\tilde F=0$, and we get
$$V^1(x^2)=\sign(c)\left[c_2\cos(|c|x^2)-c_1\sin(|c|x^2)\right]. \qedhere$$
\end{proof}

\begin{example}
Let $f_1(x^1)=\cosh(x^1)$ and $f_2(x^1)=e^{x^1}$. Then,
$$V=e^{-x^1}\cosh(x^1)\displaystyle\frac{\partial }{\partial x^1}$$
is a Ricci vector field on $\left(\mathbb R^2, \ {g}=\sech^2(x^1)dx^1\otimes dx^1+e^{-2x^1}dx^2\otimes dx^2\right)$.
\end{example}

\begin{example}
Let $f_1(x^1)=k_1e^{x^1}$, $k_1\in \mathbb R\setminus \{0\}$, and $f_2(x^1)=k_2e^{x^1}$, $k_2\in \mathbb R\setminus \{0\}$, with $k_1k_2>0$. Then,
$$V=k_1^2e^{x^1}\left[\cos \left(\frac{k_1}{k_2}x^2\right)-\sin\left(\frac{k_1}{k_2}x^2\right)\right]\displaystyle\frac{\partial }{\partial x^1}+k_2^2e^{x^1}\left[\cos \left(\frac{k_1}{k_2}x^2\right)+\sin\left(\frac{k_1}{k_2}x^2\right)\right]\displaystyle\frac{\partial }{\partial x^2}$$
is a Ricci vector field on $\left(\mathbb R^2, \ {g}=k_1^{-2}e^{-2x^1}dx^1\otimes dx^1+k_2^{-2}e^{-2x^1}dx^2\otimes dx^2\right)$.
\end{example}

\begin{remark}
Examples of nonconstant functions $f_1$ and $f_2$ which satisfy the condition $$f_1'f_2-2f_1f_2'+f_1f_2\displaystyle\frac{f_2''}{f_2'}=\textrm{nonzero} \ \textrm{constant}=:c$$ are
$$f_1(x^1)=-\frac{c}{ka}\cosh(ax^1), \ \ f_2(x^1)=ke^{ax^1}, \ \ k,a\in \mathbb R\setminus \{0\},$$
$$f_1(x^1)=\frac{c}{ka}\sinh(ax^1), \ \ f_2(x^1)=ke^{ax^1}, \ \ k,a\in \mathbb R\setminus \{0\},$$
and an example of nonconstant functions $f_1$ and $f_2$ which satisfy the condition $$\displaystyle\frac{f_1f_2'}{f_2^2}=\textrm{nonzero} \ \textrm{constant}=:c$$ is
$$f_1(x^1)=\frac{ck}{a}e^{ax^1}, \ \ f_2(x^1)=ke^{ax^1}, \ \ k,a\in \mathbb R\setminus \{0\}.$$
\end{remark}

\begin{corollary}
If $f_1=f_1(x^1)$ and $f_2=k_2\in \mathbb R\setminus \{0\}$, then, the nonzero vector field $V=\sum_{k=1}^2V^kE_k$ is a Ricci vector field satisfying $\nabla V=Q$ if and only if
$$(V^1,V^2)=(c_1,c_2)\in \mathbb R^2\setminus \{(0,0)\}.$$
\end{corollary}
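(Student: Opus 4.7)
The plan is to observe that this corollary is essentially a direct specialization of Theorem~\ref{t1}. The hypothesis $f_2 = k_2 \in \mathbb{R}\setminus\{0\}$ means that $f_2$ is a constant function, which trivially can be regarded as depending on $x^2$ alone; combined with $f_1 = f_1(x^1)$, we are precisely in the setting of Theorem~\ref{t1}, which gives $V^k = c_k \in \mathbb{R}$. The only extra content is the requirement $V \neq 0$, which merely translates into $(c_1,c_2) \neq (0,0)$.

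For readability I would also write out the short direct verification from Lemma~\ref{lps}, since it keeps the argument self-contained. Under the stated assumptions one has $\frac{\partial f_1}{\partial x^2} = 0$ and $f_2' = 0$, so both coefficients $h_{12} = \frac{f_2}{f_1}\frac{\partial f_1}{\partial x^2}$ and $h_{21} = \frac{f_1}{f_2}\frac{\partial f_2}{\partial x^1}$ vanish, and the entire right-hand side of the first equation of \eqref{s1s} collapses to $0$. Thus \eqref{s1s} reduces to
\begin{equation*}
f_1\frac{\partial V^1}{\partial x^1}=0,\quad f_1\frac{\partial V^1}{\partial x^1}=f_2\frac{\partial V^2}{\partial x^2},\quad f_1\frac{\partial V^2}{\partial x^1}=0,\quad f_2\frac{\partial V^1}{\partial x^2}=0.
\end{equation*}
Since $f_1, f_2$ are nowhere zero, we read off $\frac{\partial V^i}{\partial x^j}=0$ for all $i,j\in\{1,2\}$, hence $V^1, V^2$ are real constants. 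The converse is immediate: with $h_{12}=h_{21}=0$ and $V^1,V^2$ constant, every equation of \eqref{s1s} holds trivially.

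There is no real obstacle in this proof; the calculation is mechanical, and the only thing to be careful about is noting that the seemingly elaborate right-hand side of the first equation of \eqref{s1s} vanishes identically in this flat case, so that no curvature-type condition on $f_1$ survives. The nonzero assumption on $V$ is what rules out the trivial case $(c_1,c_2)=(0,0)$ and yields the stated characterization.
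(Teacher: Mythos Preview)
Your proof is correct and follows the same approach as the paper, which simply cites Theorem~\ref{t1}; your version just adds the explicit unpacking of \eqref{s1s} in this special case, which is a harmless elaboration. The only difference is stylistic: the paper leaves all details to the earlier theorem, while you spell out that $h_{12}=h_{21}=0$ forces the system to collapse to $\partial V^i/\partial x^j=0$.
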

\begin{proof}
It follows from Theorem \ref{t1}.
\end{proof}

\begin{example}
Let $f_1(x^1)=e^{x^1}$ and $f_2=k_2\in \mathbb R\setminus \{0\}$. Then,
$$V=e^{x^1}\displaystyle\frac{\partial }{\partial x^1}+k_2\displaystyle\frac{\partial }{\partial x^2},$$
is a Ricci vector field on $\left(\mathbb R^2, \ {g}=e^{-2x^1}dx^1\otimes dx^1+k_2^{-2}dx^2\otimes dx^2\right)$.
\end{example}

\begin{proposition}
If $f_1=f_1(x^2)$ and $f_2=k_2\in\mathbb R\setminus \{0\}$, then, the nonzero vector field $V=\sum_{k=1}^2V^kE_k$ with $V^i=V^i(x^j)$ for $i,j\in \{1,2\}$, $i\neq j$ is a Ricci vector field satisfying $\nabla V=Q$ if and only if $f_1=k_1\in\mathbb R\setminus \{0\}$ and
$$\left\{
    \begin{aligned}
&V^1=0\\
&V^2=c_2\in\mathbb R\setminus \{0\}
    \end{aligned}
  \right. \ \ \textrm{or} \ \
\left\{
    \begin{aligned}
&V^1=c_1\in\mathbb R\setminus \{0\}\\
&V^2=c_2\in\mathbb R
    \end{aligned}
  \right..
$$
\end{proposition}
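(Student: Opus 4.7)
My plan is to specialize the system \eqref{s1s} using the two hypotheses $f_2=k_2$ (constant) and $f_1=f_1(x^2)$, together with the ansatz $V^1=V^1(x^2)$, $V^2=V^2(x^1)$, and then read off the constraints equation by equation. First I would observe that these hypotheses force $h_{21}=\frac{f_1}{f_2}\cdot\frac{\partial f_2}{\partial x^1}=0$ and that $h_{12}=\frac{k_2 f_1'(x^2)}{f_1(x^2)}$ depends only on $x^2$, which kills many terms on both sides of \eqref{s1s}.

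Next, I would process the equations in the order that yields the most immediate information. The 4th equation of \eqref{s1s} reduces to $k_2(V^1)'(x^2)=0$, so $V^1=c_1\in\mathbb R$. The 2nd equation simplifies (since $V^1$ is constant and $\partial V^2/\partial x^2=0$) to $h_{12}(x^2)\,V^2(x^1)=0$ identically on $\mathbb R^2$. The 3rd equation gives $f_1(x^2)\,(V^2)'(x^1)=-h_{12}(x^2)\,c_1$, and the 1st equation collapses to $-h_{12}(x^2)\,V^2(x^1)=k_2 h_{12}'(x^2)-h_{12}(x^2)^2$.

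The key step is a short case analysis on whether $h_{12}\equiv 0$. If $h_{12}$ does not vanish identically, pick $x_0^2$ with $h_{12}(x_0^2)\neq 0$; the 2nd equation then forces $V^2\equiv 0$, and the 3rd equation at $x_0^2$ forces $c_1=0$, whence $V\equiv 0$, contradicting the nonzero hypothesis. Therefore $h_{12}\equiv 0$, which means $f_1'(x^2)\equiv 0$, i.e.\ $f_1=k_1\in\mathbb R\setminus\{0\}$. Under $h_{12}\equiv 0$, the 3rd equation becomes $k_1(V^2)'(x^1)=0$, so $V^2=c_2\in\mathbb R$, and the 1st equation is automatically satisfied ($0=0$). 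The nonzero constraint on $V$ then yields exactly the two listed alternatives: either $c_1=0$ and $c_2\neq 0$, or $c_1\neq 0$ and $c_2$ is arbitrary.

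I do not expect any real obstacle; the main thing to watch for is the \emph{identical} vanishing in the product $h_{12}(x^2)V^2(x^1)$, where one must argue pointwise in $x^2$ to conclude $V^2\equiv 0$, and symmetrically for equation (3). Finally, for the converse I would just plug $f_1=k_1$, $f_2=k_2$, $V^1=c_1$, $V^2=c_2$ into \eqref{s1s}: since all four $h$-symbols vanish and $V^1,V^2$ are constants, every equation collapses to $0=0$, which completes the proof.
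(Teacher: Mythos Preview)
Your proposal is correct and, if anything, cleaner than the paper's own argument. The specialization of \eqref{s1s} you record is accurate, and your case split on whether $h_{12}\equiv 0$ (equivalently, whether $f_1'\equiv 0$) via the identity $h_{12}(x^2)\,V^2(x^1)\equiv 0$ from the 2nd equation, followed by the 3rd equation to kill $c_1$, is airtight.

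The paper takes a slightly different route. After getting $V^1=c_1$ from the 4th equation, it works from the 3rd equation first: since $(V^2)'(x^1)=-c_1k_2\,\dfrac{f_1'(x^2)}{f_1(x^2)^2}$ has the left side a function of $x^1$ and the right side a function of $x^2$, both must equal a constant $c_0$; it then integrates to $V^2(x^1)=c_0x^1+c_2$ and $c_1k_2/f_1(x^2)=c_0x^2+c_3$, and splits into the cases $c_1=0$ and $c_1\neq 0$, invoking the 2nd equation only in the first case and the nowhere-vanishing of $f_1$ in the second. Your route avoids this separation-of-variables/integration step entirely by exploiting the 2nd equation at the outset, which buys brevity; the paper's approach, on the other hand, mirrors the recurring separation-of-variables device used in its earlier theorems. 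Both reach $f_1=k_1$ constant, after which the 1st equation is trivially $0=0$, and the nonzero constraint on $V$ yields the stated dichotomy.
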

\begin{proof}
In this case, \eqref{s1s} becomes
\begin{equation}\label{s1sbb}
\left\{
    \begin{aligned}
&\left(\displaystyle{\frac{f_1'}{f_1}}\right)'=\left(\displaystyle{\frac{f_1'}{f_1}}\right)^2\\
&f_1'V^2=0\\
&(V^2)'=-k_2\displaystyle{\frac{f_1'}{f_1^2}}V^1\\
&(V^1)'=0
    \end{aligned}
  \right. \ .
\end{equation}
From the 4th equation of the system follows that $V^1=c_1\in \mathbb R$, which, replaced in the 3rd equation, implies $(V^2)'=-c_1k_2\displaystyle{\frac{f_1'}{f_1^2}}$, which must be a constant, too, let's say, $c_0\in \mathbb R$. Therefore, $V^2(x^1)=c_0x^1+c_2$, $c_2\in \mathbb R$, and $-c_1k_2\displaystyle{\frac{f_1'}{f_1^2}}=c_0$, which, by integration, gives $c_1k_2\displaystyle{\frac{1}{f_1(x^2)}}=c_0x^2+c_3$, $c_3\in \mathbb R$.

(1) If $c_1=0$ (i.e., $V^1=0$), then, $c_0=c_3=0$ and $V^2=c_2\in \mathbb R\setminus \{0\}$, which implies, from the 2nd equation, that $f_1=k_1\in \mathbb R\setminus \{0\}$.

(2) If $c_1\neq 0$, then, $c_0x^2+c_3\neq 0$ for all $x^2\in \mathbb R$, therefore, $c_0=0$ and $c_3\in \mathbb R\setminus \{0\}$. In this case, $f_1=\displaystyle{\frac{c_1k_2}{c_3}}=:k_1\in \mathbb R\setminus \{0\}$ and $V^2=c_2\in \mathbb R$.
\end{proof}

\begin{example}
Let $f_1=k_1\in\mathbb R\setminus \{0\}$ and $f_2=k_2\in\mathbb R\setminus \{0\}$. Then,
$$V=k_1\displaystyle\frac{\partial }{\partial x^1}+k_2\displaystyle\frac{\partial }{\partial x^2}$$
is a Ricci vector field on $\left(\mathbb R^2, \ {g}=\displaystyle k_1^{-2}dx^1\otimes dx^1+k_2^{-2}dx^2\otimes dx^2\right)$.
\end{example}

\textit{Department of Mathematics}

\textit{Faculty of Mathematics and Computer Science}

\textit{West University of Timi\c{s}oara}

\textit{Bd. V. P\^{a}rvan 4, 300223, Timi\c{s}oara, Romania}

\textit{adarablaga@yahoo.com}

\end{document}